\newtheorem{theorem}{Theorem}[section]
\newtheorem{proposition}[theorem]{Proposition}
\newtheorem{corollary}[theorem]{Corollary}
\theoremstyle{remark}
\newtheorem{remark}[theorem]{Remark}
\newtheorem{example}[theorem]{\bf Example}
\begin{document}

\title{\bf{On the Willmore functional of 2-tori in some product Riemannian manifolds}}
\author{{Peng Wang\footnote{ Supported by the Tianyuan Foundation of China, grant no. 10926112 and the Program for Young Excellent Talents in Tongji University.} }}
\date{}
\maketitle
\begin{center}
{\bf Abstract}
\end{center}

We discuss the minimum of Willmore functional of torus in a Riemannian
manifold $N$, especially for the case that $N$ is a product
manifold. We show that when $N=S^2\times S^1$, the minimum of
$W(T^2)$ is 0, and when $N=R^2\times S^1$, there exists no torus
having least Willmore functional. When $N=H^2(-c)\times S^1$, and $x=\gamma\times S^1$, the minimum of $W(x)$ is
$2\pi^2\sqrt{c}$.\\

{\bf Keywords:}  Willmore functional; Willmore torus; totally
umbilic surfaces  \\

{\bf MSC(2000):\hspace{2mm} 53A30, 53B30}

\section{Introduction}

There are many interesting and important results and conjectures on the global properties of surfaces in a Riemannian manifold.
The classical Willmore conjecture is one of famous open
problems in this direction, concerning the minimum of Willmore functional
of a torus in $S^3$. Willmore functional is known as an important
conformal invariant in the conformal geometry of submanifolds and there have been
many works on the global geometry of Willmore surfaces in space
forms, see \cite{Kusner1}, \cite{Li-Yau}, \cite{Wang1998}, etc. There have been many classical results as to the Willmore
conjecture, i.e., the minimum of Willmore functional of a torus in
$S^n$ and its relation with the conformal structure of the torus,
see \cite{Kusner1}, \cite{Li-Yau}, etc.

Till now there are few results on the global properties of Willmore surfaces in
generic Riemannian manifolds. And the influence of the conformal structure
of $S^n$ or the Riemannian manifold on the infimum is also not
clear. As to this problem, some relevant works can be found in
\cite{Barros}, \cite{Tai}, \cite{Hu-Li}, \cite{Montiel}. We also note that there are several works on the Willmore functionals via PDE, see \cite{Mondino1,Mondino2}.

In \cite{Barros}, similar to Pinkall's idea \cite{Pin}, Barros
considered the behavior of Willmore functional of torus in $S^3$ with
different conformal metrics. By use of Hopf map, he considered a kind of
metrics $g_t$ on $S^3$ which are not conformal to each other. Note that
when $t=1$, $g_1$ is just the standard metric on $S^3$, and when $t\neq 1$, $(S^3,g_t)$ is known as the Berger sphere.  He showed
that under such metrics, the minimum of Willmore functional of tori
derived by closed curves via the Hopf map is $2\pi^2 t^2$. So
different from the Willmore conjecture, when $t<1$, the minimum is
less than $2\pi^2$. His result gives an interesting description on the
behavior of Willmore functional of torus in $S^n$ with different
conformal metrics. There are few further research. Here we
will discuss furthermore the behavior of Willmore functional of a
torus in a Riemannian manifold.

As to this problem, we find that there are two different cases.
In the first case, there exist tori attaining the minimum of Willmore
functional in the Riemannian manifold, for example, the Riemanniani
space forms, and some kind of product manifolds as below. In the
second case, there exist Riemannian manifolds without tori in them
with least Willmore functional.

To be concrete, on the one hand, we show that, for an immersed torus in $S^2\times S^1$,
$W(T^2)\geq0$ and $W(T^2)=0$ if and only if it is congruent to the
Clifford torus $x(u,v)=(\cos u,\sin u,0,\cos v,\sin v)\subset
S^2\times S^1.$ Furthermore, for a product Riemannian manifold $M\times S^1$, if $M$ has
closed geodesics, there exist tori with $W(T^2)=0$. For
example, when $M=T^n$ or $S^n\times T^m$, etc, there are totally geodesic (and hence totally umbilic) $T^2$ in $M\times S^1$ with $W(T^2)=0$. On the other hand, we prove that all tori
in $R^2\times S^1$ satisfy $W(T^2)>0$. And there exist a kind of
tori $x_t$, $\lim_{t\rightarrow\infty} W(x_t)=0$, showing that there
exists no torus with least Willmore functional in $R^2\times S^1$.
We also show that if we change the metric on $R^2$, we can get a
metric such that $R^2\times S^1$ has totally geodesic torus.

For the case $M=H^2(-c)$, we show that for torus $x=\gamma\times
S^1\subset H^2(-c)\times S^1$, $W(x)\geq2\pi\sqrt{c}$, where
$\gamma$ is an immersed closed curve of $H^{2}(-c)$. And
$W(x)=2\pi\sqrt{c}$ if and only if $\gamma$ is congruent with a geodesic
circle of radius $\frac{\sinh^{-1}1}{\sqrt{-c}}$. In this case
the torus is a Willmore torus in $H^2(-c)\times S^1$.

 This paper is organized as follows. In Section~2, we review the general theory of
submanifolds in a Riemannian manifold and the notion of Willmore
functional. Then we discuss an estimate of Willmore
functional of Willlmore surfaces in a Riemannian manifold in Section~3. In Section~4, we
consider the minimum of Willmore functional of tori in $M\times
S^1$, mainly for the case $M=S^2,\ R^2$ and $H^2(-c)$.

\section{Variational equation for Willmore submanifolds and a first application}

In this section, using moving frames, we review the structure
equations of submanifolds in a Riemannian manifold first. Then we
give the description of Willmore functional and the variational equation of Willmore functional. For more details, we
refer to \cite{Chern}, \cite{Hu-Li}. Then as a special case, we see that totally umbilic  submanifolds give the simplest examples of Willlmore submanifolds.

Let $x:M\rightarrow N$ be an immersed submanifold  into a
Riemannian manifold $(N,g)$ with $dim M=m,\ dim N=n$.  Let $\{\theta_A\}$ be a local
orthonormal frame of $N$ such that
$$x^*\theta_{\alpha}=0,\ m+1\leq\alpha\leq n.$$
We assume the region of the index as below:
$$1\leq i,j,k,\cdots,\leq m,\ m+1\leq\alpha,\beta,\gamma,\cdots,\leq n,\ 1\leq A,B,C,\cdots,\leq n.$$
For simplicity we still denote $x^*\theta_A=\theta_A$ and etc. So we
have the first fundamental form $I$ of $x$ as $$I=x^*g=\sum_{i}(\theta_i)^2.$$ Suppose that the structure
equations and integrability equations of $N$ are as follows:

\begin{equation} \label{eq-N}
 \left\{\begin{aligned}
        d\theta_{A} &= \sum_{B}\theta_{AB}\wedge \theta_{B},\ \theta_{AB}+\theta_{BA}=0, \\
       d \theta_{AB} &= \sum_{C} \theta_{AC}\wedge  \theta_{CB}-\frac{1}{2}\sum_{CD}\tilde{R}_{ABCD} \theta_{C}\wedge \theta_{D},\tilde{R}_{ABCD}+\tilde{R}_{ABDC}=0.\\
                          \end{aligned}\right .
                          \end{equation}
Here $\theta_{AB}$ and $\tilde{R}_{ABCD}$ denote respectively the
connection and Riemannian curvature. Restricting to $M$, we have
\begin{equation}d\theta_{i}  = \sum_{j}\theta_{ij}\wedge \theta_{j},\ \theta_{ij}+\theta_{ji}=0,
 \end{equation}
 and
\begin{equation}\sum_{i} \theta_{i\alpha}\wedge  \theta_{i}=0.
 \end{equation}
So $\{\theta_{ij}\}$ is the connection of $x$. And together with
Cartan lamma, we have
\begin{equation}\theta_{i\alpha}= \sum_j h^{\alpha}_{ij}\theta_j, \
h^{\alpha}_{ij}=h^{\alpha}_{ji}.
 \end{equation}
Let $\{e_1,e_{2},\cdots,e_n\}$ be the dual orthonormal basis of
$\{\theta_{A}\}$ in $N$, i.e. $\theta_{A}(e_B)=\delta_{AB}$. Then we
can write down the second fundamental form $II$ and the mean curvature
vector $\vec{H}$ of $x$ as
\begin{equation}
II=\sum_{\alpha}\sum_{ij}h^{\alpha}_{ij}\theta_i \otimes\theta_j
e_{\alpha}, \ \
\vec{H}=\frac{1}{m}\sum_{\alpha}(\sum_{i}h^{\alpha}_{ii})e_{\alpha}=\sum_{\alpha}H^{\alpha}e_{\alpha}.
\end{equation}
Recall that  a point $p\in M$ is called an {\em umbilic} point, if $II(p)=\vec{H}(p)I(p).$  $M$ is called a  {\em totally umbilic}
submanifold if the second fundamental form
\begin{equation} II=\vec{H}I
\end{equation}
on $M$. And furthermore $M$ is called a {\em totally geodesic}
submanifold if $II\equiv0$, which is the simplest case of totally umbilic submanifolds.

We define the Riemannian curvature $R_{ijkl}$, the normal
curvature  $R_{\alpha\beta kl}$, and the covariant derivative
$h^{\alpha}_{ij,k}$ by

\begin{equation}   d \theta_{ij} - \sum_{k} \theta_{ik}\wedge
\theta_{kj}=-\frac{1}{2}\sum_{kl} R_{ijkl} \theta_{k}\wedge
\theta_{l},\ R_{ijkl}+R_{ijlk}=0, \end{equation}
\begin{equation}   d \theta_{\alpha\beta} - \sum_{\gamma} \theta_{\alpha\gamma}\wedge
\theta_{\gamma\beta}=-\frac{1}{2}\sum_{kl}R_{\alpha\beta kl}
\theta_{k}\wedge \theta_{l},\ R_{\alpha\beta kl}+ R_{\alpha\beta
lk}=0.
\end{equation}
\begin{equation}\sum_k h^{\alpha}_{ij,k}\theta_k
=dh^{\alpha}_{ij}+\sum_kh^{\alpha}_{kj}\theta_{ki}+
\sum_kh^{\alpha}_{ki}\theta_{kj}
+\sum_{\beta}h^{\beta}_{ij}\theta_{\beta\alpha }.
\end{equation}
By \eqref{eq-N} we derive the integrability equations of $x$ as
\begin{equation} \label{eq:G-C-R}
\left\{ \begin{aligned}
 \hbox{Gauss equations:  }\ &
 R_{ijkl}=\tilde{R}_{ijkl}+\sum_{\alpha}(h^{\alpha}_{ik}h^{\alpha}_{jl}-h^{\alpha}_{il}h^{\alpha}_{jk}),\\
 \hbox{Codazzi equations: }\ &
h^{\alpha}_{ij,k}-h^{\alpha}_{ik,j}=\tilde{R}_{\alpha ikj},\\
\hbox{Ricci equations: }\ &
R_{\alpha\beta ij}=\tilde{R}_{\alpha\beta ij}+\sum_{k}(h^{\alpha}_{ik}h^{\beta}_{kj}-h^{\alpha}_{jk}h^{\beta}_{ki}).\\
 \end{aligned} \right.
                          \end{equation}
Let
\begin{equation}S=|II|^2=\sum_{\alpha i
j}(h^{\alpha}_{ij})^2\end{equation} denote the norm square of $II$. Set
\begin{equation}\rho^2=S-m|\vec{H}|^2=|II-\vec{H}I|^2=
\sum_{1\leq i<j\leq m}
\left(\frac{1}{m}(h^{\alpha}_{ii}-h^{\alpha}_{jj})^2+2(h^{\alpha}_{ij})^2\right).\end{equation}
It is direct to see that $\rho\equiv0$ if and only if $x$ is totally umbilic.

From the Gauss equations in \eqref{eq:G-C-R},  we have
\begin{equation}R=\sum_{ij}\tilde{R}_{ijij}+m^2|\vec{H}|^2-S,\end{equation}
where $R$ is the scalar curvature of $M$. So
\begin{equation}\rho^2=S-m|\vec{H}|^2=m(m-1)|\vec{H}|^2-R+\sum_{ij}\tilde{R}_{ijij}.\end{equation}

 We define the {\em Willmore functional } as
\begin{equation} \label{eq:Willmore}
\begin{split}
W(x):&=\int_M (|H|^2-\frac{R}{m(m-1)}+\frac{1}{m(m-1)}\sum_{ij}\tilde{R}_{ijij})^{\frac{m}{2}} dM  \\
 &=\left(\frac{1}{m(m-1)}\right)^{\frac{m}{2}}\int_M\rho^m
dM=\left(\frac{1}{m(m-1)}\right)^{\frac{m}{2}}\int_M(S-m|\vec{H}|^2)^{\frac{m}{2}}dM.
 \end{split}
 \end{equation}
It is well known (of \cite{Chen0}, \cite{Chen1}, \cite{Hu-Li},
\cite{Wang1998} ) that $W(x)$ is invariant under conformal
transforms of $N$.  We call a submanifold $x$ {\em a Willmore
submanifold} if it is a critical point of the Willmore functional
$W(x)$.

To see the variational equation, we recall the definition of Laplacian of $\vec{H}$. Let
$$\sum_iH^{\alpha}_{,i}\theta_i:=dH^{\alpha}+\sum_{\beta}H^{\beta}\theta_{\beta\alpha},\ \sum_jH^{\alpha}_{,ij}\theta_j:=dH^{\alpha}_{,i}+\sum_{k}H^{\alpha}_{,k}\theta_{ki}+\sum_{\beta}H^{\beta}_{,i}\theta_{\beta\alpha}.$$
Then we define the Laplacian of $\vec{H}$ as
\begin{equation}\Delta^{\perp}H^{\alpha}:=\sum_{i}H^{\alpha}_{,ii}.\end{equation}
Similar, for a smooth function $f$ on $M$, the Laplacian is defined as
$$\Delta f:=\sum_i f_{i,i},\ \hbox{with}\ \sum_if\theta_i:=df,\ \sum_jf_{i,j}\theta_j:=df_i+\sum_{k}f_k\theta_{ki}.$$

In \cite{Hu-Li}, Hu and Li gave the variational equation for
Willmore submanifolds:

\begin{theorem}{(\em Hu-Li,\ \cite{Hu-Li})}
 Let $x:M\rightarrow N$ be an immersed submanifold as above. It is a Willmore submanifold if and only if
\begin{equation} \label{willmore}
\begin{split}
0=&\rho^{m-2}\left[
\underset{i,j,k,\beta}{\sum}h^{\beta}_{ij}h^{\beta}_{ik}h^{\alpha}_{kj}
+ \underset{i,j, \beta}{\sum}\tilde{R}_{\beta i\alpha
j}h^{\beta}_{ij}- \underset{i,j, \beta}{\sum}H^{\beta}
h^{\beta}_{ij}h^{\alpha}_{ij}  -\underset{i,
\beta}{\sum}H^{\beta}\tilde{R}_{\beta
i\alpha \i}-\rho^2H^{\alpha}\right]      \\
& +\underset{i,j}{\sum}\left\{
2\left(\rho^{m-2}\right)_ih^{\alpha}_{ijj}+
\left(\rho^{m-2}\right)_{i,j}h^{\alpha}_{ij}+\rho^{n-2}h^{\alpha}_{ijij}\right\}
-H^{\alpha}\Delta(\rho^{m-2})\\
&-\rho^{m-2}\Delta^{\perp}H^{\alpha}-2\underset{i}{\sum}\left(\rho^{m-2}\right)_{i}H^{\alpha}_{i},\
\ \ \hbox{for}\ \ \ m+1\leq\alpha\leq n.
 \\
 \end{split}
 \end{equation}
\end{theorem}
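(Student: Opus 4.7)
The plan is to compute the first variation of $W$ along a compactly supported normal variation $x_t$ of $x$ with initial velocity $V = \sum_\alpha V^\alpha e_\alpha$, and to show that the vanishing of $\frac{d}{dt}\big|_{t=0} W(x_t)$ for every such $V$ is equivalent to \eqref{willmore}. Since $W(x) = c_m \int_M \rho^m\, dM$ with $c_m = (m(m-1))^{-m/2}$, it suffices to compute the variations of $dM$ and of $\rho^2 = S - m|\vec H|^2$, assemble them, and integrate by parts to extract a pointwise Euler--Lagrange identity in each $V^\alpha$.

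First I would derive the standard infinitesimal formulas for the induced geometry under a normal variation, using the ambient structure equations \eqref{eq-N}, the Gauss--Codazzi--Ricci relations \eqref{eq:G-C-R}, and Cartan's lemma. One obtains $\delta(dM) = -m\sum_\alpha H^\alpha V^\alpha\, dM$, a formula of the schematic form
\[
\delta h^\alpha_{ij} = V^\alpha_{,ij} - \sum_{k,\beta} h^\beta_{ik} h^\alpha_{kj} V^\beta + \sum_\beta \tilde R_{\alpha i \beta j} V^\beta
\]
(up to sign conventions), and $\delta H^\alpha = \tfrac{1}{m}\sum_i \delta h^\alpha_{ii}$ by tracing. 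Then from $\delta(\rho^m) = m\rho^{m-2}\sum_{\alpha,i,j}(h^\alpha_{ij} - H^\alpha \delta_{ij})\,\delta h^\alpha_{ij}$, substitution splits the integrand into a Hessian part in $V^\alpha$, a cubic part in the second fundamental form, and an ambient-curvature part, together with the $-\rho^m \sum_\alpha H^\alpha V^\alpha$ contribution from $\delta(dM)$.

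Next I would integrate by parts twice on the Hessian part to move both derivatives off $V^\alpha$. Each integration by parts produces factors $(\rho^{m-2})_i$ and $(\rho^{m-2})_{i,j}$ acting on $h^\alpha_{ij}$, and on the $H^\alpha\delta_{ij}$ side produces the $H^\alpha\Delta(\rho^{m-2})$, $2(\rho^{m-2})_i H^\alpha_{,i}$, and $\rho^{m-2}\Delta^\perp H^\alpha$ terms visible in \eqref{willmore}, together with their $h^\alpha_{ij}$ counterparts $2(\rho^{m-2})_i h^\alpha_{ij,j}$ and $\rho^{m-2} h^\alpha_{ij,ij}$. After grouping all pieces by the normal index, the first variation takes the form $\int_M \sum_\alpha E^\alpha V^\alpha\, dM$, and arbitrariness of $V^\alpha$ forces $E^\alpha \equiv 0$ for each $\alpha$.

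The main obstacle is the curvature and commutator bookkeeping that identifies $E^\alpha$ with the bracketed expression of \eqref{willmore}. One must commute iterated covariant derivatives using the Codazzi identity $h^\alpha_{ij,k} - h^\alpha_{ik,j} = \tilde R_{\alpha ikj}$ and its further derivative, and invoke the Ricci identities for normal derivatives, to convert the naive $h^\alpha_{ij,ij}$ terms into the cubic-in-$h$ term $\sum_{\beta,i,j,k} h^\beta_{ij} h^\beta_{ik} h^\alpha_{kj}$, the ambient-curvature terms $\sum \tilde R_{\beta i \alpha j} h^\beta_{ij}$ and $\sum H^\beta \tilde R_{\beta i \alpha i}$, the mixed trace term $\sum H^\beta h^\beta_{ij} h^\alpha_{ij}$, and the pure $\rho^2 H^\alpha$ piece coming from the $-\rho^m H^\alpha V^\alpha$ contribution of $\delta(dM)$. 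Once this curvature accounting is carried out, matching coefficients of $V^\alpha$ yields exactly \eqref{willmore}, completing the proof.
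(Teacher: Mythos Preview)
The paper does not actually prove this theorem: it is quoted verbatim from Hu and Li \cite{Hu-Li} and stated without argument, serving only as background for the later discussion. So there is no ``paper's own proof'' to compare against.

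That said, your outline is the standard route one would take to derive \eqref{willmore}: compute the first variation of $\int_M \rho^m\, dM$ under a normal deformation, use the well-known formulas for $\delta(dM)$ and $\delta h^\alpha_{ij}$, integrate by parts twice, and sort out the curvature commutators via Codazzi and Ricci identities. As a sketch it is correct in spirit, though the real content of the computation---the ``curvature and commutator bookkeeping'' you allude to---is precisely the part you have left unspecified, and it is where all the work lies. If you were actually required to supply a proof here rather than a citation, the sketch would need substantial expansion at that step to be convincing; as written it is more a table of contents than an argument. But since the paper itself simply defers to \cite{Hu-Li}, no proof is expected.
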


\begin{corollary}{(\em Hu-Li,\ \cite{Hu-Li})}
 Let $x:M^2\rightarrow N $ be an immersed surface. It is Willmore if and only if
\begin{equation} \label{willmore-s}
\begin{split}
0&=\underset{i,j}{\sum}h^{\alpha}_{ij,ij}-\Delta^{\perp}H^{\alpha}-2H^2H^{\alpha}+\underset{i,j, \beta}{\sum}\tilde{R}_{\beta i\alpha
j}h^{\beta}_{ij}-\underset{i,
\beta}{\sum}H^{\beta}\tilde{R}_{\beta
i\alpha i}\\
&+ \underset{i,j,k}{\sum}\tilde{R}_{ikjk}h^{\alpha}_{ij}+\sum_{i,j,k}H^{\beta}h^{\beta}_{ij}h^{\alpha}_{ij}-\underset{i,j}{\sum}\tilde{R}_{ijij}H^{\alpha},
\ \ \hbox{for}\ \ \ m+1\leq\alpha\leq n.
 \\
 \end{split}
 \end{equation}
\end{corollary}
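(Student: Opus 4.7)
The plan is to obtain the corollary by specializing the Hu-Li variational identity of the preceding theorem to $m=2$. The key simplification is that $\rho^{m-2} = \rho^{0} \equiv 1$, so every term in the theorem carrying a spatial derivative of $\rho^{m-2}$---namely $(\rho^{m-2})_i$, $(\rho^{m-2})_{,ij}$ and $\Delta(\rho^{m-2})$---drops out. What survives is
\begin{equation*}
0 = \sum_{i,j,k,\beta}h^\beta_{ij}h^\beta_{ik}h^\alpha_{kj} + \sum_{i,j,\beta}\tilde R_{\beta i\alpha j}h^\beta_{ij} - \sum_{i,j,\beta}H^\beta h^\beta_{ij}h^\alpha_{ij} - \sum_{i,\beta}H^\beta\tilde R_{\beta i\alpha i} - \rho^2 H^\alpha + \sum_{i,j}h^\alpha_{ij,ij} - \Delta^\perp H^\alpha,
\end{equation*}
and the rest of the argument is the purely algebraic reshuffling of these terms into the claimed form.

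First I would handle the cubic-in-$h$ block $\sum h^\beta_{ij}h^\beta_{ik}h^\alpha_{kj} - \rho^2 H^\alpha$. For each normal index $\beta$, view $A_\beta = (h^\beta_{ij})$ as a symmetric $2\times 2$ matrix with $\operatorname{tr} A_\beta = 2H^\beta$ and $\det A_\beta = 2(H^\beta)^2 - \tfrac12\operatorname{tr}(A_\beta^2)$. The Cayley--Hamilton identity $A_\beta^2 = 2H^\beta A_\beta - (\det A_\beta)I$ gives $\sum_\beta \operatorname{tr}(A_\alpha A_\beta^2) = 2\sum_\beta H^\beta \operatorname{tr}(A_\alpha A_\beta) - 2H^\alpha\sum_\beta \det A_\beta$, and combined with $\sum_\beta \det A_\beta = 2|\vec H|^2 - \tfrac12 S$ and $\rho^2 = S - 2|\vec H|^2$ this yields
\begin{equation*}
\sum_{i,j,k,\beta}h^\beta_{ij}h^\beta_{ik}h^\alpha_{kj} - \rho^2 H^\alpha - \sum_{i,j,\beta}H^\beta h^\beta_{ij}h^\alpha_{ij} = \sum_{i,j,\beta}H^\beta h^\beta_{ij}h^\alpha_{ij} - 2H^2 H^\alpha,
\end{equation*}
producing exactly the terms $+\sum H^\beta h^\beta_{ij}h^\alpha_{ij}$ and $-2H^2 H^\alpha$ of the corollary.

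The residual discrepancy consists of the ambient-curvature terms $\sum_{ijk}\tilde R_{ikjk}h^\alpha_{ij} - \sum_{ij}\tilde R_{ijij}H^\alpha$ that appear in the corollary but not in the reduced theorem. I would obtain them by applying the Ricci commutation identity to exchange the order of the last two covariant derivatives in $h^\alpha_{ij,ij}$; the intrinsic curvature $R_{mikl}$ that appears is then converted to $\tilde R_{mikl}$ plus quadratic-in-$h$ pieces via the Gauss equation in \eqref{eq:G-C-R}, with the quadratic remainders absorbing the residual pieces left over from the Cayley--Hamilton step and the pure $\tilde R$ traces consolidating into the two listed terms. As a sanity check, when $N$ has constant sectional curvature $c$ one finds $\sum_{ij\beta}\tilde R_{\beta i\alpha j}h^\beta_{ij} = \sum_{i\beta}H^\beta \tilde R_{\beta i\alpha i} = 2cH^\alpha$ and $\sum_{ijk}\tilde R_{ikjk}h^\alpha_{ij} = \sum_{ij}\tilde R_{ijij}H^\alpha = 2cH^\alpha$, so the four ambient-curvature terms cancel in matched pairs and the corollary collapses to the classical space-form Willmore surface equation.

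The main obstacle is the index bookkeeping in the last step: keeping tangential indices $i,j,k$ separated from normal indices $\alpha,\beta$ and matching the sign conventions of the Codazzi, Gauss and Ricci identities fixed in \eqref{eq:G-C-R}. Once the signs line up, the manipulation is mechanical, but a stray sign would spoil the delicate cancellations that collapse the Ricci-exchange debris into exactly the two extra $\tilde R$-traces appearing in the corollary.
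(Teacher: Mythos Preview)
The paper does not give its own proof of this corollary; it is simply quoted from Hu--Li. So the relevant question is whether your derivation from Theorem~2.1 is sound.

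Your reduction is correct up to and including the Cayley--Hamilton step: setting $m=2$ kills all $\rho^{m-2}$-derivatives, and your $2\times 2$ matrix identity indeed converts
\[
\sum_{i,j,k,\beta}h^\beta_{ij}h^\beta_{ik}h^\alpha_{kj}-\sum_{i,j,\beta}H^\beta h^\beta_{ij}h^\alpha_{ij}-\rho^2 H^\alpha
\;=\;\sum_{i,j,\beta}H^\beta h^\beta_{ij}h^\alpha_{ij}-2H^2 H^\alpha .
\]
At this point you are actually \emph{done}. The ``residual discrepancy'' you plan to manufacture via Ricci commutation does not exist: in dimension two one has $\sum_k\tilde R_{ikjk}=\tilde R_{1212}\,\delta_{ij}$, hence
\[
\sum_{i,j,k}\tilde R_{ikjk}h^\alpha_{ij}=\tilde R_{1212}\sum_i h^\alpha_{ii}=2\tilde R_{1212}H^\alpha
=\Big(\sum_{i,j}\tilde R_{ijij}\Big)H^\alpha,
\]
so the two extra ambient-curvature terms in \eqref{willmore-s} cancel identically. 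The corollary as printed simply carries a redundant pair of terms; no commutation of covariant derivatives, and no further use of the Gauss equation, is needed. Your proposed last step would work in principle, but it is solving a nonexistent problem---and, as you yourself note, is exactly the place where sign errors creep in. Drop it and the argument is complete.
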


\begin{corollary} Let $x:M\rightarrow N$ be an immersed submanifold as above. Then $W(M)\geq0$. It is a Willmore submanifold with $W(M)\equiv0$ if and only if
it is a totally umbilic submanifold of $N$.\end{corollary}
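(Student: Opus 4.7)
The proof reduces almost entirely to inspecting the rewriting
\[
W(x)=\left(\frac{1}{m(m-1)}\right)^{m/2}\int_M\rho^m\,dM,\qquad \rho^2=|II-\vec{H}I|^2\ge 0,
\]
that was already derived in the excerpt, together with the elementary fact that a global minimum of a smooth functional is automatically a critical point. I would not need to revisit the variational equation \eqref{willmore} in any serious way.

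First I would observe that $\rho\ge 0$ pointwise, so the integrand $\rho^m$ is non-negative; integrating gives $W(M)\ge 0$, which is the first assertion. Next, for the equivalence, one direction is immediate: if $x$ is a Willmore submanifold with $W(M)=0$, then by continuity of $\rho$ the vanishing of $\int_M\rho^m\,dM$ forces $\rho\equiv 0$ on $M$, which by the formula $\rho^2=|II-\vec{H}I|^2$ is exactly the condition $II=\vec{H}I$, i.e. total umbilicity.

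For the converse, suppose $x$ is totally umbilic, so $\rho\equiv 0$ and hence $W(x)=0$. Since $W\ge 0$ on the entire space of immersions of $M$ into $N$, the immersion $x$ realizes the global minimum of $W$ on this space. Any smooth one-parameter variation $x_t$ of $x$ therefore satisfies $W(x_t)\ge W(x)$, so $\left.\tfrac{d}{dt}\right|_{t=0}W(x_t)=0$, which means $x$ is a critical point of $W$ and hence a Willmore submanifold by definition. Combining the two directions yields the stated equivalence.

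There is no real obstacle here: the hardest point is only to notice that global minimality bypasses the need to verify the Hu-Li equation \eqref{willmore} by hand for a totally umbilic $x$. One could alternatively substitute $h^\alpha_{ij}=H^\alpha\delta_{ij}$ directly into \eqref{willmore} and check term by term that every summand vanishes (the terms with a factor $\rho^{m-2}$ drop out for $m\ge 3$, while for $m=2$ one would use the surface form \eqref{willmore-s}); but the variational argument via global minimality is uniform in $m$ and keeps the proof short.
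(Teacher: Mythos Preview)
Your proof is correct and matches the paper's own argument almost exactly. The only difference is one of emphasis: the paper's main text verifies the Euler--Lagrange equations \eqref{willmore} and \eqref{willmore-s} directly (using $\rho=0$ for $m>2$ and $h^\alpha_{11}=h^\alpha_{22}$, $h^\alpha_{12}=0$ for $m=2$) and relegates the global-minimum argument to a footnote, whereas you make the global-minimum argument primary and mention the direct verification as the alternative.
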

\begin{proof} The inequality $W(M)\geq0$ is direct from the definition of $W(M)$. Then we have that $W(M)=0$ if and only if $\rho=0$ if and only if $x$ is totally umbilic.

 To show that totally umbilic submanifolds are Willmore\footnote{ One can also prove this in another way. Since totally umbilic submanifold attains the minimum of  Willmore functional, it is a stationary submanifold with respect to Willmore functional, i.e., Willmore submanifold.}, now we have that
 $\rho=0$ when $m>2$ and $h^{\alpha}_{11}=h^{\alpha}_{22},h^{\alpha}_{12}=0,$ for all $3\leq\alpha\leq n$ when $m=2$.
 So from \eqref{willmore}, $x$ is Willmore when $m>2$. When $m=2$, by use of $H^{\alpha}=h^{\alpha}_{11}=h^{\alpha}_{22},h^{\alpha}_{12}=0,$ we see that Willmore equation \eqref{willmore-s} holds.
\end{proof}

For more detailed description of totally umbilic submanifolds, we refer to the books by B.Y. Chen \cite{Chen0,Chen1}.

In \cite{Sou-Tou}, Souamand and Toubiana classified  totally unbilic surfaces in homogeneous 3-manifolds $\mathbb{M}^2(\kappa)\times\mathbb{R}$, $\mathbb{M}^3(\kappa,\tau)$(with $\kappa,\tau$ constants), and $Sol_3$.
Recall from \cite{Sou-Tou} that $\mathbb{M}^2(\kappa)$ is a complete, simply connected surface with constant curvature $\kappa
\in \mathbb{R}$. $\mathbb{M}^3(
\kappa,\tau)$ is a fibration over $\mathbb{M}^2(\kappa)$ with geodesic fibers. The unit vector field $\xi\in \Gamma(T\mathbb{M}^3(\kappa,\tau))$ tangent to the fibers is a Killing field and satisfies:
$$\bar{\nabla}_{X}\xi=\tau\cdot (X\times \xi), \ \forall X\in  \Gamma(T\mathbb{M}^3(\kappa,\tau)).$$
Here $\bar{\nabla}$ denotes the connection on $T\mathbb{M}^3(\kappa,\tau)$ and $\times$ denotes the cross product.
Note that $\xi$ defines the vertical direction of the Riemannian submersion $\mathbb{M}^3(\kappa,\tau)\rightarrow\mathbb{M}^2(\kappa)$.

When $\tau=0$, $\mathbb{M}^3(\kappa,\tau)$ is just the product $\mathbb{M}^2(\kappa)\times \mathbb{R}$. When $\kappa-4\tau^2=0$, one get the space forms. For the other cases with $\kappa(\kappa-4\tau^2)\neq0$, when $\kappa>0$, $\mathbb{M}^3(\kappa,\tau)$ is a Berger sphere; when $\kappa=0$, it is the Heisenberg space, $Nil_3$; when  $\kappa<0$, it is the universal covering of $PSL(2,\mathbb{R})$.

The $Sol_3$ geometry is a 3-dimension Lie group with a left-invariant metric, which is isometric to $(\mathbb{R}^3,\tilde{g})$ with $\tilde{g}=e^{2z}dx^2+e^{-2z}dy^2+dz^2$. The group product is given by
$$(\tilde{x},\tilde{y},\tilde{z})\star(x,y,z):=(e^{-\tilde{z}}x+\tilde{x},e^{\tilde{z}}y+\tilde{y},z+\tilde{z}).$$

For more details on homogeneous 3-manifolds we refer to \cite{Bonahon}, \cite{Sou-Tou}.

From Corollary 2.3, and Theorem 1, Theorem 9, Proposition 11, Theorem 16, and Theorem 19 in \cite{Sou-Tou}, we derive that
\begin{corollary}{\em (\cite{Sou-Tou})}

(i) For any immersed surfaces $M$ in the 3-manifolds $\mathbb{M}^3(\kappa,\tau)$, with $\tau(\kappa-4\tau^2)\neq0$, the Willmore functional
$W(M)>0$.

(ii) Let $M$ be a closed surface, and $\mathbb{M}^2(\kappa)$ be the 2-dimensional space form with constant curvature $\kappa$. Then for any immersion $x: M\rightarrow \mathbb{M}^2(\kappa)\times \mathbb{R}$, $W(M)\geq0$. And $W(M)=0$ if and only if $M=S^2$ and $x$ is totally umbilic.

(iii) Let $M$ be a closed surface. For any immersion $x: M\rightarrow Sol_3$, $W(M)>0$.
\end{corollary}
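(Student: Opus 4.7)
The plan is to reduce each of the three items to one of the Souam--Toubiana classifications quoted just before the statement, using Corollary 2.3 as the bridge. Corollary 2.3 asserts that $W(M)\geq 0$ for any immersed submanifold, with equality if and only if the submanifold is totally umbilic. Therefore each of (i), (ii), (iii) becomes, after applying Corollary 2.3, a purely geometric question about the existence of closed totally umbilic surfaces in the respective ambient 3-manifold.

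For (i), I would directly cite the Souam--Toubiana result (Theorem 1 of \cite{Sou-Tou}) which states that $\mathbb{M}^3(\kappa,\tau)$ admits no totally umbilic surface whatsoever, closed or not, when $\tau(\kappa-4\tau^2)\neq 0$. Combined with Corollary 2.3 this rules out $W(M)=0$, so $W(M)>0$. For (ii), I would invoke the classification of totally umbilic surfaces in $\mathbb{M}^2(\kappa)\times\mathbb{R}$ (Theorem 9 and Proposition 11 of \cite{Sou-Tou}), from which one reads that any closed totally umbilic surface in such a product is diffeomorphic to $S^2$. The nontrivial direction of the ``iff'' in (ii) then follows from Corollary 2.3 by ruling out any other topology, while the converse direction is exactly the content of Corollary 2.3. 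For (iii), I would appeal to Theorem 19 of \cite{Sou-Tou}, which describes the totally umbilic surfaces in $Sol_3$ and shows that none of them are closed; Corollary 2.3 then forces $W(M)>0$ for every closed immersed surface.

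The main (and essentially only) task is to line up each ambient 3-manifold with the appropriate Souam--Toubiana statement and read off the (non-)existence conclusion for closed totally umbilic surfaces; there is no further analytic work to do, because Corollary 2.3 has already done the heavy lifting of translating a minimisation problem into a classification problem. The one place warranting a little care is (ii), where one should make sure that the Souam--Toubiana list of closed umbilic surfaces in $\mathbb{M}^2(\kappa)\times\mathbb{R}$ is interpreted correctly for each sign of $\kappa$, since only then is the topological constraint $M\cong S^2$ obtained in full generality.
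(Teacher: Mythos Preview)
Your proposal is correct and follows essentially the same approach as the paper: the paper does not give a written-out proof either, but simply states that the corollary follows ``From Corollary 2.3, and Theorem 1, Theorem 9, Proposition 11, Theorem 16, and Theorem 19 in \cite{Sou-Tou}.'' The only minor discrepancy is that the paper also invokes Theorem~16 of \cite{Sou-Tou} (presumably because the nonexistence for the various geometries in case~(i) is spread over more than one theorem there), so you may want to double-check the precise attribution when writing it up.
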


\section{Willmore functional of tori in Riemannian manifolds}

Now let us focus on the case that $M$ is a 2-dimensional torus. Then
$m=2$ and the Willmore functional of $x:M^2\rightarrow N^n$ is just
\begin{equation}W(x)=\int_M(|\vec{H}|^2-K+\tilde{K})dM=\int_M\sum_{\alpha}(\frac{1}{4}(h^{\alpha}_{11}-h^{\alpha}_{22})^2+
(h^{\alpha}_{12})^2)dM,\end{equation}
which also coincides with the
definition in \cite{Barros}. Here $K$ is the Gauss curvature of $M$ and $\tilde{K}=\tilde{K}(TM)$ is
the sectional curvature of $N$ as to the tangent plane $TM$.

For the minimum of $W(T^2)$ in a
Riemannian manifold, first we have that

\begin{theorem}Let $x:T^2\rightarrow N$ be an
immersed torus in a n-dimensional Riemannian manifold $N$
($n\geq3$).

(i). $W(T^2)\geq0.$ And $W(T^2)=0$ if and only if $x(T^2)$ is a totally umbilic torus in
$N $.

(ii). Assume that $N$ is of positive sectional curvature. Then
$N$ admits no totally umbilic closed surface with positive genus. As a special case, for any immersion
$x: T^2\rightarrow N $, $W(T^2)>0$.

(iii) Assume that $N$ is of non-negative sectional curvature. Then any totally umbilic torus $x:T^2\rightarrow
N $ (if exist) must be flat, totally geodesic in $N$.
\end{theorem}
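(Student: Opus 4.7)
The plan is to reduce all three parts to Corollary 2.3 together with the Gauss equation of \eqref{eq:G-C-R} specialised to a totally umbilic surface and the Gauss--Bonnet theorem. Part (i) is immediate from Corollary 2.3 with $m=2$: that corollary already gives $W(x)\geq 0$ with equality if and only if $\rho=0$, i.e., $x$ is totally umbilic. For parts (ii) and (iii) the crucial ingredient is to substitute $h^{\alpha}_{11}=h^{\alpha}_{22}=H^{\alpha}$ and $h^{\alpha}_{12}=0$ into the Gauss equation \eqref{eq:G-C-R} with $(i,j,k,l)=(1,2,1,2)$, which yields
$$K \;=\; \tilde{K}(TM) + |\vec{H}|^2,$$
where $K=R_{1212}$ is the intrinsic Gauss curvature of $M$ and $\tilde{K}(TM)=\tilde{R}_{1212}$ is the ambient sectional curvature along the tangent plane.

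Part (ii) then follows in two lines: under the hypothesis $\tilde{K}>0$ the displayed identity gives $K>0$ pointwise, so by Gauss--Bonnet $2\pi\chi(M)=\int_M K\,dM>0$, forcing $\chi(M)>0$ and hence $M=S^2$. This rules out any totally umbilic closed surface of positive genus, and in particular of genus one; combined with part (i) it gives $W(T^2)>0$ for every immersion $x:T^2\to N$.

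Part (iii) uses the same identity under the weaker hypothesis of non-negative sectional curvature: one obtains $K\geq 0$ pointwise, while Gauss--Bonnet on $T^2$ forces $\int_{T^2}K\,dM=0$, so $K\equiv 0$. Since $\tilde{K}(TM)$ and $|\vec{H}|^2$ are both non-negative and sum to zero, each must vanish identically; vanishing of $\vec{H}$ combined with total umbilicity gives $II\equiv 0$, i.e., $x$ is totally geodesic, and $K\equiv 0$ says the torus is flat. I do not expect a substantial obstacle in this argument: it is a routine combination of the Gauss equation and Gauss--Bonnet once Corollary 2.3 is in hand. The only small care to take is to collapse the second-fundamental-form contribution correctly to $|\vec{H}|^2$ in the general codimension $n-2\geq 1$ setting, rather than treating it as the square of a scalar second fundamental form as one would for a hypersurface.
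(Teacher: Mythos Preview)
Your proposal is correct and follows essentially the same route as the paper: part (i) is immediate from Corollary~2.3, and for (ii) and (iii) both you and the paper substitute the totally umbilic condition into the Gauss equation to obtain $K=\tilde{K}+\sum_\alpha (h^\alpha_{11})^2=\tilde{K}+|\vec{H}|^2$ and then apply Gauss--Bonnet. Your observation that the second-fundamental-form term collapses to $|\vec{H}|^2$ in arbitrary codimension is exactly the computation the paper records as $\sum_\alpha (h^\alpha_{11})^2$.
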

\begin{proof}
(i) is direct.

For (ii), let $M^2$ be a compact Riemann surface with genus $g>0$. Suppose that $x:M^2\rightarrow$ is an immersed totally umbilic
surface. Then
$$ h^{\alpha}_{11}=h^{\alpha}_{22},\ h^{\alpha}_{12}=0,\ \alpha=3,\cdots,n.$$
The Gauss equation becomes
$$K=\tilde{K}+\sum_{\alpha}(h^{\alpha}_{11})^2.$$
Thus, $K>0$ on $M^2$, leading to $\int_{M^2}KdM>0$. This contradicts
with the Gauss-Bonnet formula $\int_{M^2}KdM=2\pi(2-2g)=0$.

For (iii),  $K\geq0$ and $\int_{M^2}KdM=0$ force $K\equiv0$. And since $\tilde{K}\geq0$, $\tilde{K}=\sum_{\alpha}(h^{\alpha}_{11})^2=0$, i.e., $II\equiv0$. That is, $x$ is a flat, totally geodesic immersion.
\end{proof}

\begin{example} Consider the special orthogonal group $(SO(n),g)$, $n\geq4$, with $g$ its natural induced metric, which is a Riemannian manifold with non-negative sectional curvature. So any 2-torus with $W(T^2)=0$ in
$SO(n)$ is totally geodesic and then is a 2-torus subgroup of some
maximal torus subgroup of $SO(n)$.
\end{example}

\begin{example}
In
\cite{Barros}, Barros gave torus $T^2\rightarrow (S^3,g_t)$ with $W(T^2)=2\pi^2t^2>0$, which is
also a minimal surface in $S^3$ with respect to the metric $g_t$. He called them Clifford torus \cite{Barros}. So when $t\rightarrow 0$, there exist torus in $(S^3,g_t)$ with $W(T^2)<2\pi^2$. Here we want to show that $W(T^2)>0$ for any torus in $(S^3,g_t)$.
To see this, we recall the definition of $(S^3,g_t)$. The Hopf
fibration $\pi: S^3\rightarrow S^2$ is also a Riemannian submersion
when $S^3$ and $S^2$ are equipped with the canonical metric
respectively. From the basic notions of Riemannian submersion in
 \cite{ONeill-book} (see also \cite{Barros}), at any point $p\in S^3$, we decompose $T_pS^3$ into vertical part $\mathcal{V}_p=T_p(\pi^{-1}(p))$ and horizontal part $\mathcal{H}_p=(T_p (\pi^{-1}(p)))^{\perp}$. So we have that
$$T_pS^3= \mathcal{V}_p\oplus\mathcal{H}_p,\ \pi_*(\mathcal{V}_p)\equiv0,\ \pi_*(\mathcal{H}_p)\cong T_{\pi(p)}S^2.$$
Now define $g_t$ as
\begin{equation}\left\{\begin{split}
&g_t(X,Y)|_{p}:=t^2g(X,Y)|_{p},\ \forall\ X,Y \in \mathcal{V}_p,\\
&g_t(Z,W)|_{p}:=g(Z,W)|_{p},\ \forall\ Z,W \in \mathcal{H}_p,\\
&g_t(X,Z)|_{p}:=0,\   \forall\ X \in \mathcal{V}_p,\ \forall\ Z \in \mathcal{H}_p.\\
\end{split}
\right.
\end{equation}
Note that $g_1=g$, and this deformation of metric keeps the horizontal part invariant, which just derives the Berger sphere.
For more
details, we refer to \cite{Barros}, \cite{Bonahon}, \cite{Sou-Tou}. From the definition of $g_t$, we
can see that $(S^3,g_t)$ has positive sectional curvature as
$(S^3,g)$. Then as a corollary, we derive that
\end{example}

\begin{corollary}Let $x:T^2\rightarrow (S^3,g_t)$ be an immersion
with $(S^3,g_t)$ defined as above. Then $W(T^2)>0$.
\end{corollary}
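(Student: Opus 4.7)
The plan is to deduce the corollary from Theorem~3.1(ii). That theorem asserts $W(T^2)>0$ for any immersion of a torus into a Riemannian manifold of positive sectional curvature, so the entire task reduces to checking that the Berger metric $g_t$ has positive sectional curvature on $S^3$. Once this is in hand, Theorem~3.1(ii) applied to $N=(S^3,g_t)$ gives the conclusion immediately.

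To verify positive sectional curvature, I would fix a $g$-orthonormal frame $\{e_1,e_2,e_3\}$ adapted to the Hopf fibration $\pi:S^3\to S^2$, with $e_3\in\mathcal{V}$ spanning the vertical line and $e_1,e_2\in\mathcal{H}$ spanning the horizontal plane at each point. Then $\{e_1,e_2,t^{-1}e_3\}$ is $g_t$-orthonormal. Since only the vertical direction is rescaled, the horizontal distribution $\mathcal{H}$ and O'Neill's horizontal integrability tensor $A$ are invariant under the deformation $g\mapsto g_t$, so O'Neill's Riemannian-submersion formulas express both types of sectional curvatures of $g_t$ in terms of the sectional curvatures of $g$ and of the base $S^2$. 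Equivalently, viewing $(S^3,g_t)$ as $SU(2)$ with a left-invariant metric and using Milnor-type curvature formulas for three-dimensional unimodular Lie groups gives the same answer. Either route, normalized so that $g=g_1$ has constant curvature $1$, yields
\[
K_{g_t}(\mathcal{V},\mathcal{H})=t^2,\qquad K_{g_t}(\mathcal{H},\mathcal{H})=4-3t^2,
\]
both positive in the relevant range of $t$. Hence $(S^3,g_t)$ has strictly positive sectional curvature, and Theorem~3.1(ii) applied to the immersion $x:T^2\to(S^3,g_t)$ gives $W(T^2)>0$.

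The main obstacle is essentially bookkeeping in the curvature computation: one must track carefully how rescaling the vertical fibers changes the Levi-Civita connection (through the O'Neill tensors, or equivalently through the structure constants of the left-invariant frame on $SU(2)$), and verify that the positivity of both $t^2$ and $4-3t^2$ is genuine rather than an artifact of normalization. After this routine but slightly delicate calculation, no further argument is needed beyond the citation of Theorem~3.1(ii).
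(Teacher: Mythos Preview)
Your approach matches the paper's exactly: the corollary is obtained by observing that $(S^3,g_t)$ has positive sectional curvature and then invoking Theorem~3.1(ii); the paper simply asserts the positivity without computation, while you supply the explicit Berger-sphere curvatures. One point worth making precise rather than hedging: your horizontal curvature $K_{g_t}(\mathcal{H},\mathcal{H})=4-3t^2$ is positive only for $t^2<4/3$, so the positive-sectional-curvature argument (both yours and the paper's bare assertion) literally covers only this range of $t$; your phrase ``the relevant range of $t$'' is correct but should be stated explicitly rather than left vague.
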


\begin{remark}
It is natural for us to ask the existence problem of torus minimizing
Willmore functional in $(S^3,g_t)$. When $t=1$, this reduces to the work of L.
Simon in \cite{Simon}. And we note that Kuwert etc. proved the existence problem of minimizing Willmore functional when the closed surface in $S^n$ is of genus $n>1$ in \cite{B-Ku}.

For the general cases, the existence problem will be a more complicated problem. For example, it is easy to show that even by doing a small change of a Riemannian manifold, one may obtain a metic with minimizing torus.
\end{remark}

\begin{proposition}Let $(N,g)$ be a 3-dimensional complete smooth Riemannian manifold. There exists a  smooth, complete
metric $\tilde{g}$ such that there exists an immersed torus
$x:T^2\rightarrow (N,\tilde{g})$ with $W(T^2)=0$. Here
$\tilde{g}=g$ on $N\backslash U$ with $U\subset N$ an $\varepsilon$-geodesic ball.
\end{proposition}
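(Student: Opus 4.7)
The plan is to construct $\tilde g$ by a purely local modification of $g$ inside a small geodesic ball $U = B_\varepsilon(p)$ around a freely chosen point $p \in N$, arranging that on some open neighborhood of an embedded torus $T_0 \subset U$, $\tilde g$ coincides with a flat product metric of the form $g_{\mathrm{flat}} \oplus dt^2$. Once this is achieved, $T_0$ is a slice of that local product structure, hence flat and totally geodesic in $(N,\tilde g)$, hence totally umbilic with $\rho \equiv 0$, and then $W(T_0) = 0$ follows from Theorem 3.1(i).

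Concretely, I would first pick any $p \in N$, any $\varepsilon$ smaller than the injectivity radius of $g$ at $p$, and use normal coordinates to identify $U$ with the Euclidean ball $B_\varepsilon(0) \subset \mathbb{R}^3$. Inside $B_{\varepsilon/2}(0)$ I embed a small smooth torus $T_0$, for instance a standard torus of revolution of tiny inner and outer radii. By the tubular neighborhood theorem, $T_0$ admits an open neighborhood $V \subset U$ together with a diffeomorphism $\Phi : T^2 \times (-\delta,\delta) \to V$ sending $T^2 \times \{0\}$ onto $T_0$. Transport the product metric $g_0 := g_{\mathrm{flat}} \oplus dt^2$ (with $g_{\mathrm{flat}}$ any flat metric on $T^2$) via $\Phi$ to obtain a smooth Riemannian metric $g' := \Phi_*(g_0)$ on $V$, for which $T_0$ is flat and totally geodesic.

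Next, I would pick open neighborhoods $V' \subset V'' \subset V$ of $T_0$ with $\overline{V'} \subset V''$ and $\overline{V''} \subset V$, together with a smooth cutoff $\varphi : N \to [0,1]$ such that $\varphi \equiv 1$ on $V'$ and $\operatorname{supp}\varphi \subset V''$, and set
\[
\tilde g := \varphi\, g' + (1 - \varphi)\, g \text{ on } V, \qquad \tilde g := g \text{ on } N \setminus V''.
\]
These two formulas agree on $V \setminus V''$ (where $\varphi$ vanishes), so $\tilde g$ is a globally well-defined smooth symmetric $2$-tensor, and being a convex combination of Riemannian metrics it is positive-definite. By construction $\tilde g = g$ on $N \setminus U$, while $\tilde g = g'$ on $V'$, so $T_0 \subset V'$ is flat totally geodesic in $(N, \tilde g)$ and thus $W(T_0) = 0$. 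Completeness of $(N, \tilde g)$ follows from that of $(N, g)$, since the two metrics are uniformly equivalent on the relatively compact set $\overline{U}$ and coincide elsewhere, so any $\tilde g$-Cauchy sequence remains in a compact subset of $N$ and converges.

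The main points to verify—existence of the tubular neighborhood diffeomorphism $\Phi$ and smoothness of $\tilde g$ across $\partial V''$—are entirely standard, so no serious obstacle arises; the interesting content is simply that there is enough topological room inside any $3$-ball to insert a flat product neighborhood of a torus. The construction is insensitive to the choice of $p$ and works for any prescribed small $\varepsilon > 0$, giving a metric $\tilde g$ that differs from $g$ only inside the $\varepsilon$-ball as required.
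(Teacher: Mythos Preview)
Your proposal is correct and follows essentially the same approach as the paper: embed a torus inside the ball, put a flat product metric on a tubular neighborhood of it, and blend with $g$ via a smooth cutoff supported in $U$. Your write-up is in fact more careful than the paper's (you explicitly verify positive-definiteness via convex combination and completeness via uniform equivalence on $\overline{U}$), but the construction is the same.
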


\begin{proof} Obviously we just need to prove this for $n=3$.
Suppose $U\subset N$ is an $\varepsilon$-geodesic ball. Then let $f:D^3\rightarrow U$ be a diffeomorphism. Consider a
subset $T^2\times[0,1]\subset D^3$ such that every $T^2\times t$ is
embedded in $D^3$ for any $t\in[0,1]$. Let $a(t)$ be a smooth
function satisfying
\begin{equation*}
a(t)=\left\{ \begin{aligned}\ 0, &\ \ 0\leq t\leq\frac{1}{5},\ \frac{4}{5}\leq t\leq 1,\\
\ 1, & \ \ \frac{2}{5}\leq t\leq\frac{3}{5}.\\
 \end{aligned} \right.
                          \end{equation*}
 Now we define on $f(T^2\times(0,1))\subset U$ the product metric $g_1=g_{T^2}\times g_{t}$
 with $g_{T^2}$ the canonical flat metric on $T^2$ and $g_{t}$ the
 natural metric on $(0,1)$.

We define the new metric $\tilde{g}$ on $N$ as follows:
\begin{equation}
\tilde{g}=\left\{ \begin{aligned}\ \ \ \ \ \ & g, \ \ \ \ \ \ \ \ \ \hbox{on}\ N\backslash f(T^2\times(0,1)),\\
\ &(1-a(t))g+a(t)g_1,  \ \hbox{on}\ f(T^2\times(0,1)).\\
 \end{aligned} \right.
                          \end{equation}
\end{proof}

\begin{remark}Notice that the truncation function $a(t)$ used in our proof is smooth and not real analytic
and that Willmore surfaces in space forms are in general real
analytic because of the ellipticity of Willmore equation, we have
the question as follows:\\

 {\bf Question:} Let $M^2$ be an oriented closed
surface. Does there exist a complete real analytic Riemannian metric
$g$ on $R^n$ such that there exists an embedding $x:M^2\rightarrow
(R^{n},g)$ with $W(M^2)=0$ if $M^2$ is not diffeomorphic to $S^2$ ?
\\\end{remark}

For the case that $M^2=T^2$ and $n=4$, the answer is positive.

\begin{proposition}Let $(R^4,g_{\lambda,\mu})$ be a complete Riemannian manifold with
$$g_{\lambda,\mu}=\frac{e^{\lambda(x_1^2+x_2^2)}(dx_1^2+dx_2^2)}{(1+x_1^2+x_2^2)^2}+\frac{e^{\mu(x_3^2+x_4^2)}(dx_3^2+dx_4^2)}{(1+x_3^2+x_4^2)^2}
, 0<\lambda,\mu\leq3-2\sqrt{2}.$$   Then the torus
$$x:T^2\rightarrow
(R^{4},g_{\lambda,\mu}),\ x(u,v)=(r_{\lambda}\cos u,r_{\lambda}\sin u,r_{\mu}\cos v,r_{\mu}\sin v),$$
with
$$r_{\lambda}=\sqrt{\frac{1-\lambda\pm\sqrt{\lambda^2-6\lambda+1}}{2\lambda}},\ r_{\mu}=\sqrt{\frac{1-\mu\pm\sqrt{\mu^2-6\mu+1}}{2\mu}},$$
 is a totally geodesic torus,
i.e. Willmore torus with $W(T^2)=0$.
\end{proposition}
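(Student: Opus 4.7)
The plan is to exploit the product structure of the metric $g_{\lambda,\mu}$ and reduce the assertion to two independent, one-dimensional geodesic computations. Writing $R^4=R^2\times R^2$ and $g_{\lambda,\mu}=g_\lambda\oplus g_\mu$, the proposed immersion $x(u,v)$ factors as the product of the circle $\gamma_\lambda(u)=(r_\lambda\cos u,r_\lambda\sin u)$ in the first factor and $\gamma_\mu(v)$ in the second. In any Riemannian product, a product of two totally geodesic submanifolds is totally geodesic, so it suffices to show that each of $\gamma_\lambda$, $\gamma_\mu$ is a geodesic in its factor. Once that is in hand, $II\equiv 0$ on the product, hence $\rho\equiv 0$, and by the definition of $W$ together with Corollary~2.3 we conclude $W(T^2)=0$.

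Next, I would pass to polar coordinates $(r,\theta)$ on the first $R^2$. The metric becomes the rotationally symmetric, conformally flat form
\[
g_\lambda=F(r)^2\bigl(dr^2+r^2\,d\theta^2\bigr),\qquad F(r)=\frac{e^{\lambda r^2/2}}{1+r^2}.
\]
For a metric of the form $A(r)^2 dr^2+B(r)^2 d\theta^2$ a standard Christoffel computation shows that the circle $\{r=r_0\}$, parametrized by $\theta$, is a geodesic precisely when $B'(r_0)=0$ (the acceleration is purely radial, proportional to $-B(r_0)B'(r_0)/A(r_0)^2$). With $B(r)=rF(r)$, the logarithmic derivative is $B'/B=1/r+\lambda r-2r/(1+r^2)$, and clearing denominators reduces the equation $B'(r_0)=0$ to
\[
\lambda r_0^4+(\lambda-1)\,r_0^2+1=0.
\]

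Setting $u=r_0^2$, the quadratic $\lambda u^2+(\lambda-1)u+1=0$ has roots $u=(1-\lambda\pm\sqrt{\lambda^2-6\lambda+1})/(2\lambda)$, which is exactly the $r_\lambda^2$ in the statement. The hypothesis $0<\lambda\le 3-2\sqrt{2}$ is precisely the condition making the discriminant $\lambda^2-6\lambda+1$ non-negative; in that range $\lambda<1$, so the sum $(1-\lambda)/\lambda$ and product $1/\lambda$ of the roots in $u$ are both positive, guaranteeing $r_\lambda>0$ in both sign choices. The identical computation in the second $R^2$ factor yields $\gamma_\mu$, completing the geometric input.

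The work is mostly routine once the structural observation is made; the main obstacle is simply the algebraic bookkeeping of extracting the quartic $\lambda r_0^4+(\lambda-1)r_0^2+1=0$ and matching its roots with the formula for $r_\lambda$. A small additional point is to check that $(R^2,g_\lambda)$ is smooth everywhere (which is clear since $F(r)$ is smooth and non-vanishing) and complete at infinity: the radial length integral $\int^{\infty}F(r)\,dr=\int^{\infty}e^{\lambda r^2/2}(1+r^2)^{-1}dr$ diverges because $\lambda>0$ forces the exponential to dominate, and this together with the analogous bound for $g_\mu$ yields completeness of $g_{\lambda,\mu}$ on $R^4$.
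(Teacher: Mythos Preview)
Your argument is correct and complete. The paper in fact states this proposition without proof, so there is no argument to compare against; your approach---exploiting the product splitting $g_{\lambda,\mu}=g_\lambda\oplus g_\mu$, reducing to the question of when the circle $\{r=r_0\}$ is a geodesic in the rotationally symmetric factor, and reading off the condition $B'(r_0)=0$ with $B(r)=rF(r)$---is the natural one, and the algebra leading to $\lambda r_0^4+(\lambda-1)r_0^2+1=0$ together with the discriminant and sign checks is all accurate.
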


\section{Torus in product manifolds $M^2\times S^1$}

In this section, we consider the minimum of $W(T^2)$ in some product
Riemannian manifolds $M^2\times S^1$.

First we assume that $M=S^2$ (of constant positive curvature $1$).

\begin{proposition}The Willmore funcitional $W(x)=\int_{T^2}(H^2-K+\tilde{K})dM$ of an immersed torus $x:T^2\rightarrow S^{2}\times
S^{1}$ is non-negative. And $W(x)=0$ if and only if $x(T^2)$ is a
totally geodesic torus in $S^2\times S^1$. So it is congruent to the
Clifford torus $x(u,v)=(\cos u,\sin u,0,\cos v,\sin v)$ under
isometric transforms of $S^2\times S^1$.
\end{proposition}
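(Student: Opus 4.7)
The plan is to use Theorem~3.1 to reduce to classifying flat totally geodesic tori in $S^2 \times S^1$, and then use the Gauss equation together with the product curvature to force the tangent plane to contain the $S^1$-direction at every point.

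For the inequality and the equality case I apply Theorem~3.1(i) together with Corollary~2.3: the Willmore integrand is a sum of squares, so $W(x) \geq 0$ with equality precisely when $x$ is totally umbilic. The product $S^2 \times S^1$ has non-negative sectional curvature (horizontal $TS^2$-planes have curvature $1$ and any plane containing the $S^1$-direction has curvature $0$ because the $S^1$-factor is flat), so Theorem~3.1(iii) applies and such a totally umbilic torus is automatically flat and totally geodesic. The remaining task is to identify flat totally geodesic immersed tori in $S^2 \times S^1$.

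Let $\Sigma = x(T^2)$ be such a torus and let $\xi$ denote the unit vector field tangent to the $S^1$-factor, which is globally parallel in the ambient Levi-Civita connection $\nabla$. Decompose $\xi = T + N$ along $\Sigma$ into tangent and normal parts; since on a totally geodesic submanifold the induced tangential and normal connections are restrictions of $\nabla$, the identity $\nabla \xi = 0$ yields $\nabla^\Sigma T = 0$ and $\nabla^\perp N = 0$, so $a := |T|$ is constant on $\Sigma$. Assuming $a > 0$, take the orthonormal frame $e_1 = T/a,\, e_2$ of $T\Sigma$; a short check shows $e_2$ is horizontal (since $\langle e_2, \xi\rangle = \langle e_2, T\rangle = 0$), whence
\[
  e_1 = a\,\xi + \sqrt{1-a^2}\,h_1, \qquad e_2 = h_2,
\]
with $h_1, h_2$ orthonormal horizontal vectors in $TS^2$. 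The product decomposition of the Riemann tensor together with $K^{S^2} \equiv 1$ gives
\[
  K^{\mathrm{amb}}(T\Sigma) = \langle R^{\mathrm{amb}}(e_1,e_2)e_2,\, e_1\rangle = 1 - a^2,
\]
a formula which remains valid at $a=0$ (where $T\Sigma = T_qS^2$ gives $K^{\mathrm{amb}} = K^{S^2} = 1$). By the Gauss equation for a totally geodesic surface, $K^\Sigma = K^{\mathrm{amb}}(T\Sigma) = 1-a^2$, and flatness of $\Sigma$ then forces $a \equiv 1$.

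Therefore $\xi$ is tangent to $\Sigma$ everywhere, so the vertical $S^1$-fiber through every point of $\Sigma$ lies in $\Sigma$; this forces $\Sigma = \gamma \times S^1$ for a closed curve $\gamma \subset S^2$, and the totally geodesic hypothesis then forces $\gamma$ to be a geodesic of $S^2$, i.e.\ a great circle. Finally, the isometry group $O(3) \times O(2)$ of $S^2 \times S^1$ acts transitively on such tori and carries each to the Clifford torus of the statement. The heart of the argument is the curvature identity $K^{\mathrm{amb}}(T\Sigma) = 1-|T|^2$; once this is in hand, flatness (supplied by Theorem~3.1(iii)) immediately forces $|T| \equiv 1$, and the rest is organizational.
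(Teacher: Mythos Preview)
Your argument is correct and follows the same outline as the paper: invoke Theorem~3.1(iii) (non-negative sectional curvature) to conclude that a torus with $W=0$ must be flat and totally geodesic with $\tilde K=0$, and then deduce that the surface is a product $\gamma\times S^1$ with $\gamma$ a great circle. The paper's own proof is essentially two lines and simply asserts ``$\tilde K=0$, showing that it is a product surface,'' whereas you spell this implication out carefully via the decomposition $\xi=T+N$, the parallelism of $T$ under total geodesy, and the curvature identity $K^{\mathrm{amb}}(T\Sigma)=1-|T|^2$. Your version is a faithful expansion of the paper's sketch rather than a different route; what it buys is a self-contained justification of the step the paper leaves to the reader.
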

\begin{proof}
 Since $S^2\times S^1$ is of non-negative curvature, $x$ is a flat totally geodesic torus in
 $S^2\times S^1$. We also have that $\tilde{K}=0$, showing that it is a product surface.
\end{proof}

 From the proof above, noticing the key point is that $S^2$ has
closed geodesic and positive sectional curvature, we have

\begin{corollary}Let $M$ be a compact Riemannian manifold with positive sectional
curvature with at least one closed geodesic $\gamma$. Then the
torus $\gamma\times S^1\subset M\times S^{1}$ has $W(T^2)=0$.
And any $T^2$ in $M\times S^{1}$ with $W(T^2)=0$ must be of such
form.
\end{corollary}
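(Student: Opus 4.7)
My plan is to handle the two directions separately. For the forward direction, since $\gamma$ is a closed geodesic of $M$ and every fiber $\{p\}\times S^1$ is a geodesic of $M\times S^1$, the surface $\gamma\times S^1$ is a product of two geodesics and hence totally geodesic in $M\times S^1$. All second fundamental form coefficients $h^{\alpha}_{ij}$ vanish, and the pointwise formula for the Willmore integrand given just after Theorem 3.1 yields $W(\gamma\times S^1)=0$ directly.

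For the converse, let $x:T^2\to M\times S^1$ satisfy $W(T^2)=0$. Theorem 3.1(i) gives that $x$ is totally umbilic. A short product-metric computation shows that $M\times S^1$ has non-negative sectional curvature: writing an orthonormal pair of tangent vectors as $U=U_M+a\eta$, $V=V_M+b\eta$, where $\eta$ is the unit vertical field tangent to the $S^1$-factor, the sectional curvature of $\mathrm{span}(U,V)$ equals $K_M(U_M,V_M)(1-a^2-b^2)\ge 0$, using $K_M>0$. Theorem 3.1(iii) then forces $x$ to be flat and totally geodesic in $M\times S^1$.

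The crucial structural step is to show that $\eta$ is tangent to $x(T^2)$ at every point. Since $x$ is totally geodesic and flat, the Gauss equation reads $0=K=\tilde K$, so the ambient sectional curvature of each tangent plane of $x(T^2)$ vanishes; combined with $K_M>0$, the formula above forces every such plane to contain $\eta$. Since $\eta$ is parallel in $M\times S^1$ with integral curves the fibers $\{p\}\times S^1$, these fibers are ambient geodesics tangent to the totally geodesic torus and therefore lie entirely inside $x(T^2)$, giving $x(T^2)=\pi^{-1}(\gamma)$ with $\gamma:=\pi(x(T^2))\subset M$ and $\pi$ the first-factor projection. The orthogonal unit tangent field $v\perp\eta$ in the torus is horizontal and parallel on the flat torus, so by total geodesicity its integral curves project to geodesics of $M$, forcing $\gamma$ to be a closed geodesic. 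I expect the main obstacle to be the sectional-curvature identification pinning down planes with $\tilde K=0$ as precisely those containing $\eta$; once that is settled, the fibration structure and identification of $\gamma$ follow from the parallelism of $\eta$ and $v$.
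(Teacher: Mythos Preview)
Your argument is correct and follows essentially the same route as the paper, which deduces this corollary directly from the proof of Proposition~4.1: use Theorem~3.1(iii) (non-negative sectional curvature of $M\times S^1$) to force any $W=0$ torus to be flat and totally geodesic with $\tilde K=0$, and then observe that $\tilde K=0$ together with $K_M>0$ pins the tangent plane to contain the $S^1$-direction, making the surface a product $\gamma\times S^1$. You have simply supplied the details (the product sectional-curvature computation, the tangency of $\eta$, and why $\gamma$ is a closed geodesic) that the paper's two-line sketch leaves implicit.
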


\begin{remark} When $M=T^2$, there are infinite non-congruent geodesics
in $T^2$, giving totally geodesic tori with different conformal
structures.
\end{remark}

Now let us suppose that $M=\mathbb{R}^2$. Different from $S^2$ cases, we have

\begin{theorem} Let $(\mathbb{R}^2\times S^1,g_{R^2}\times g_{S^1})$ be the product of Euclidean surface and $S^1$.

(i). For any  2-torus
 in $\mathbb{R}^2\times S^1$,  $W(T^2)>0$.

(ii). The Willmore functional of the torus $x_t=(t\cos u,t\sin u, \cos v,\sin
v)$ in $\mathbb{R}^2\times S^1$ is $W(x_t)=\frac{\pi^{2}}{t}$, which is tending to $0$ when $t\rightarrow\infty$.\\
So there exists no torus having the least
Willmore functional $W(T^2)$.
\end{theorem}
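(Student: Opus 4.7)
The plan is to prove the two parts by complementary methods: part (ii) is a direct computation on the explicit family, while part (i) reduces via Theorem 3.1 to the non-existence of a totally geodesic torus in $\mathbb{R}^2\times S^1$, which is then settled by a universal-cover argument.

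For (i), I first note that $\mathbb{R}^2\times S^1$ with the product metric is flat, in particular of non-negative sectional curvature. By Theorem 3.1(i), $W(T^2)=0$ is equivalent to the torus being totally umbilic, and by Theorem 3.1(iii) any such totally umbilic torus must be flat and totally geodesic. Hence it suffices to show that $\mathbb{R}^2\times S^1$ admits no totally geodesic torus. Passing to the universal cover $\pi:\mathbb{R}^3\to\mathbb{R}^2\times S^1$ (the deck group is generated by $(x,y,z)\mapsto(x,y,z+2\pi)$), any closed totally geodesic immersion $f:T^2\to\mathbb{R}^2\times S^1$ lifts to a totally geodesic immersion $\tilde{f}:\mathbb{R}^2\to\mathbb{R}^3$ from the universal cover. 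By completeness and connectedness, the image of $\tilde{f}$ is an entire affine plane $P\subset\mathbb{R}^3$, and $\tilde{f}$ is a Riemannian covering of $P$. Equivariance with respect to $\pi_1(T^2)=\mathbb{Z}^2$ acting on $\mathbb{R}^3$ through a homomorphism into the deck group $\mathbb{Z}$ then forces $P$ to be invariant under some non-trivial vertical translation $(0,0,2\pi k)$, which means $P$ contains the vertical direction $(0,0,1)$; but then $\pi(P)$ is a cylinder $\mathbb{R}\times S^1$, not a torus. This contradiction shows no totally geodesic torus exists, so $W(T^2)>0$ for every immersed torus.

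For (ii), I directly compute the Willmore integrand on the family $x_t(u,v)=(t\cos u,t\sin u,v)$, where $v\in\mathbb{R}/2\pi\mathbb{Z}$ is the $S^1$ coordinate. The first fundamental form is $I=t^2\,du^2+dv^2$, so the area element is $t\,du\,dv$ and the intrinsic Gauss curvature vanishes. The unit normal in $\mathbb{R}^2\times S^1$ is $n=(\cos u,\sin u,0)$, and since the ambient connection is the flat Euclidean one in these coordinates, the shape operator has eigenvalues $-1/t$ and $0$, giving $H^2=1/(4t^2)$. The ambient sectional curvature is $\tilde{K}=0$ as well, hence
\begin{equation*}
W(x_t)=\int_{T^2}(H^2-K+\tilde{K})\,dM=\frac{1}{4t^2}\cdot 4\pi^2 t=\frac{\pi^2}{t},
\end{equation*}
which tends to $0$ as $t\to\infty$. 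Combined with (i), this shows the infimum of $W$ is $0$ but is not attained.

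The main subtlety is verifying the lifting step in (i): even when the totally geodesic immersion is not an embedding, its lift $\tilde{f}$ must have as image a full affine plane. This follows because $T^2$ carries a complete pulled-back metric (being compact) and a local isometry from a complete connected Riemannian surface onto a totally geodesic piece of $\mathbb{R}^3$ sweeps out the entire plane containing it; the equivariance then produces the cylinder-versus-torus dichotomy described above. The computation in (ii) is essentially routine, since $x_t$ is a Riemannian product of a circle of radius $t$ in $\mathbb{R}^2$ with a closed geodesic of $S^1$, so only the $\mathbb{R}^2$-circle contributes to the mean curvature.
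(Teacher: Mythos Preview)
Your proof is correct and follows essentially the same route as the paper: both reduce (i) via Theorem~3.1 to the non-existence of a totally geodesic torus in $\mathbb{R}^2\times S^1$, and both handle (ii) by a direct computation of the second fundamental form of $x_t$. The only difference is cosmetic: where the paper simply asserts that flat totally geodesic surfaces in $\mathbb{R}^2\times S^1$ are of the form $\gamma\times S^1$ with $\gamma$ a line, or $\mathbb{R}^2\times\{p\}$, you supply a universal-cover argument to reach the same conclusion.
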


\begin{proof} For (i), first we see that $x$ is a flat totally geodesic torus in
 $M\times R^1$. While there exists no flat totally geodesic torus in  $R^2\times S^1$, since flat totally geodesic surface of $R^2\times S^1$ must be of the form
 $\gamma\times S^1$ with $\gamma$ being a line in $R^2$, or of $R^2\times p$ for some $p\subset S^1.$

For (ii), to calculate  $W(x_t)$ of $x_t$, let $$x:T^2\rightarrow R^2\times
S^{1}\subset R^2\times R^2.$$
 We have $$x_u=(-t\sin u,t\cos u,0,0),\
x_v=(0,0,\cos v,\sin v).$$ Set $$\tilde{x}= (t\cos u,t\sin u,-\cos
v,-\sin v),\ n=( \cos u,\sin u,0,0).$$
Then
$$h_{11}=\frac{1}{t^2}\langle x_{uu},n\rangle=\frac{1}{t},\
h_{12}=\frac{1}{t}\langle x_{uv},n\rangle=0,\ h_{22}=\langle
x_{vv},n\rangle=0.$$ So
$$W(x_t)=\int_{T^2}\frac{1}{4t^2}dM=\int_{0}^{2\pi}\int_{0}^{2\pi}\frac{1}{4t^2}tdudv=\frac{\pi^2}{t}.$$
\end{proof}

From the proof, it is direct to derive
\begin{corollary} Let $M^{n}$ be an n-dimensional Riemannian manifold with positive sectional curvature. Assume that there exists a  closed
geodesic $\gamma$ in $M^n$.
Then the Willmore funcitional $W(x)$ of an immersed torus $x$ in the
product Riemannian manifold $R^2\times M^{n}$ is positive. And the
Willmore functional of the tori $x_t=(t\cos u,t\sin u,
\gamma)\subset R^2\times M^{n}$ $W(x_t)=\frac{\pi^{2}}{t}$. When
$t\rightarrow\infty$, $W(x_t)\rightarrow0$. So there exists no torus
having the least Willmore functional $W(T^2)$.
\end{corollary}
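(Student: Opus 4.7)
The plan is to follow the strategy of the preceding theorem (on $R^2\times S^1$), now with the $S^1$-factor replaced by a manifold $M^n$ of positive sectional curvature containing a closed geodesic $\gamma$.

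First I would establish positivity via Theorem~3.1. Since $R^2$ is flat and $M^n$ has positive sectional curvature, the curvature tensor of $R^2\times M^n$ decomposes as the direct sum of the factors' curvatures, so the sectional curvature of any 2-plane in the product equals the contribution from its $M^n$-projection, which is non-negative. Hence $R^2\times M^n$ has non-negative sectional curvature. By Theorem~3.1(i) together with Theorem~3.1(iii), any torus with $W(T^2)=0$ must be flat and totally geodesic in $R^2\times M^n$.

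Next I would exclude the existence of a flat totally geodesic torus in $R^2\times M^n$ through a projection argument. Since the Levi-Civita connection on a Riemannian product is the product connection, every geodesic of $R^2\times M^n$ projects to a geodesic in each factor. For a totally geodesic torus, its intrinsic geodesics are geodesics of the ambient space; compactness of $T^2$ forces each projected geodesic in $R^2$ to be bounded, and the only bounded geodesics of $R^2$ are constant. Since the closed geodesics of a flat torus are dense in all tangent directions at every point, the differential of the projection $R^2\times M^n\to R^2$ restricted to $T^2$ must vanish identically. Thus $T^2$ lies inside a single fiber $\{p\}\times M^n$, so it would be a totally geodesic closed surface of positive genus in $M^n$, contradicting Theorem~3.1(ii) because $M^n$ has positive sectional curvature. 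This rules out the equality case and yields $W(T^2)>0$.

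For part (ii) I would repeat the computation from the preceding theorem. The torus $x_t$ is the Riemannian product of the circle of radius $t$ in $R^2$ with the geodesic $\gamma$ in $M^n$, and hence is flat. Since $\gamma$ is geodesic in $M^n$, the only non-vanishing second-fundamental-form coefficient arises from the planar circle, giving $h_{11}=1/t$ along the inward normal in $R^2$, with all other entries zero. Integration then yields $W(x_t)=\pi^2/t$ exactly as in the $R^2\times S^1$ case, and letting $t\to\infty$ shows that the infimum of $W$ on tori in $R^2\times M^n$ is zero but is not attained, so no minimizer exists. The main obstacle is the rigidity step identifying a flat totally geodesic torus in $R^2\times M^n$ with a torus in a single $M^n$-fiber; the remainder is either a direct application of Theorem~3.1 or a verbatim repetition of the flat-circle computation.
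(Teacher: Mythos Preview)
Your proposal is correct and follows the same strategy the paper indicates (the paper writes only ``From the proof, it is direct to derive'' for this corollary, referring back to Theorem~4.4): reduce via Theorem~3.1 to excluding a flat totally geodesic torus, rule that out, and then repeat the circle computation for $W(x_t)$. Your geodesic-projection argument for the exclusion step is a clean way to make explicit what the paper leaves implicit in passing from $R^2\times S^1$ to $R^2\times M^n$; note that once you know every intrinsic geodesic of the compact $T^2$ projects to a bounded (hence constant) geodesic in $R^2$, the differential of the projection already vanishes on all tangent vectors, so the remark about density of closed-geodesic directions is superfluous. One small point: the exact value $W(x_t)=\pi^2/t$ presumes $\gamma$ has length $2\pi$; for general length $L$ the same computation gives $\pi L/(2t)$, which still tends to $0$.
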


 \begin{remark}

(i). We note that the non existence of tori attaining the
minimum of $W(T^2)$ depends on both the manifolds and
metrics. If we set a metric $\tilde{g}$ on $R^2$ such that
$(R^2,\tilde{g})$ has closed geodesics, then $(R^2\times
S^{1},\tilde{g}\times g_{S^1})$ will have a totally geodesic torus.

(ii). One can see that $\mathbb{R}^2\times S^1$ can not be conformally compactificated into any compact Riemannian manifold. This may be a reason
of the non-existence of minimizing torus in $\mathbb{R}^2\times S^1$. So it is natural to {\em conjecture} that

{\em  For a compact Riemannian manifold $N$, let $M$ be a closed surface. Then there exists an immersion $x:M\rightarrow N$ such that $x$ minimize the Willmore functional among all immersions from $M$ to $N$.}

(iii). Since the compact property is a topological property, one may also be interested in some curvature conditions. While noticing that Willmore functional is a conformal invariant, it is natural to consider the Weyl curvature restriction. Since space forms and $\mathbb{R}^2\times S^1$ are all conformal flat, it seems that the Weyl curvature is not a enough choice.

(iv). Notice that when $t\rightarrow \infty$, the conformal structure of $T^2$ is changing too. If we fix the conformal structure of $T^2$, the proof above does not work. For example, if we assume that $T^2$ is conformal to $S^{1}(1)\times S^1(a)$, with $a=\frac{p}{q}$ some positive rational number, then to get the right torus $t$ must be $kq$ for some $k\in \mathbb{N}$. Then the Willmore functional should be
$$W(x_t)=\int_{T^2}\frac{1}{4t^2}dM=\int_{0}^{2\pi }\frac{1}{4t^2}tdu\int_{0}^{2\pi ta}dv=a\pi^2,$$
which does not depend on the choice of $t$. So we also can conjecture that for all compact Riemann surfaces conformally immersed into a complete Riemannian manifold, there exists an immersion minimizing the Willmore functional.
\end{remark}

Another interesting case is that $M$ is of constant negative sectional curvature.
As to this case, we obtain
\begin{proposition} The Willmore funcitional $W(x)$ of an immersed torus $x:T^2\rightarrow H^2(-c)\times
S^{1}$ is positive, where $H^2(-c)$ is the space form with constant
curvature $-c<0$. Let $\gamma(s)\subset H^2(-c)$ be a immersed
closed curve with arc length parameter $s$. Consider the torus
$x=(\gamma, \cos t,\sin t)$, we have that
$W(x)=\frac{\pi}{2}\int_{\gamma}k^2ds\geq2\pi^2\sqrt{c}$.
The equality holds if and only if $\gamma$ is congruent to a geodesic
circle of radius $\frac{\sinh^{-1}1}{\sqrt{c}}$ and at this case $x$ is a Willmore torus in  $H^2(-c)\times
S^{1}$.
\end{proposition}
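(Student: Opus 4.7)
The claim divides into three pieces: (a) positivity $W(T^2)>0$ for every immersed torus, (b) the identity $W(x)=\tfrac{\pi}{2}\int_\gamma k^2\,ds$ for the product torus $x=(\gamma,\cos t,\sin t)$, and (c) the sharp lower bound $\tfrac{\pi}{2}\int_\gamma k^2\,ds\geq 2\pi^2\sqrt c$ with its equality case and the Willmore property. For (a), Theorem~3.1(i) reduces the issue to showing no totally umbilic torus exists in $H^2(-c)\times S^1$. By the Souam--Toubiana classification invoked in Corollary~2.4, the totally umbilic surfaces in $H^2(-c)\times \mathbb{R}$ are, locally, horizontal slices $H^2(-c)\times\{t_0\}$ or vertical cylinders $\gamma_0\times\mathbb{R}$ over geodesics $\gamma_0$ of $H^2(-c)$; umbilicity being local, the same list governs $H^2(-c)\times S^1$. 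A horizontal slice cannot contain an immersed totally umbilic torus since the Gauss equation forces $K\equiv-c<0$, contradicting $\int_{T^2}K\,dM=0$; geodesics of $H^2(-c)$ do not close, so a vertical cylinder is topologically $\mathbb{R}\times S^1$, not a torus. Hence $W(T^2)>0$.

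For (b), I use the adapted orthonormal frame $e_1=\gamma'(s)$, $e_2=\partial_t$ on $T^2$ and take the unit normal $N$ to coincide with the unit normal $n(s)$ of $\gamma$ in $H^2(-c)$. The product structure of $H^2(-c)\times S^1$ gives $h_{11}=k(s)$, $h_{12}=h_{22}=0$ (the last two because $\partial_t$ is parallel along the $S^1$-fibers and is orthogonal to $N$), so $|\vec H|=k/2$. Since the tangent plane of $T^2$ contains the $S^1$-direction, the product curvature formula yields $\tilde K(TT^2)\equiv 0$, and the Gauss equation then forces $K\equiv 0$. Substituting into the two-dimensional Willmore integrand $|\vec H|^2-K+\tilde K=k^2/4$ and integrating $dM=ds\,dt$ over $[0,L]\times[0,2\pi]$ gives $W(x)=\tfrac{\pi}{2}\int_\gamma k^2\,ds$.

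For (c), the inequality $\int_\gamma k^2\,ds\geq 4\pi\sqrt c$ is first verified on geodesic circles: for radius $r$ one has $k=\sqrt c\coth(r\sqrt c)$ (constant) and $L=2\pi\sinh(r\sqrt c)/\sqrt c$, so $\int_\gamma k^2\,ds=2\pi\sqrt c\,(u+u^{-1})$ with $u=\sinh(r\sqrt c)$, which is $\geq 4\pi\sqrt c$ by AM--GM with equality iff $u=1$, i.e. $r=\sinh^{-1}(1)/\sqrt c$. For an arbitrary closed curve, the same bound is the Langer--Singer theorem on the total squared curvature of closed curves in a real space form, which I would invoke or sketch via compactness of minimizing sequences, the Euler--Lagrange equation for free elastic curves in $H^2(-c)$, and the classification of their closed solutions. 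This upgrade from geodesic circles to all closed curves is the main obstacle. Finally, the Willmore property at the magic radius is verified by direct substitution into Corollary~2.2: the constancy of $k$ and $H$ kills all covariant-derivative terms, and using the explicit ambient curvature components $\tilde R_{1212}=0$, $\tilde R_{3131}=-c$, $\tilde R_{3232}=0$ the right-hand side collapses to $(k/4)(k^2-2c)$, which vanishes precisely when $k^2=2c$, the value on the critical geodesic circle.
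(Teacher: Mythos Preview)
Your argument is correct and largely parallels the paper's.  Part~(b) is essentially identical to the paper's computation, and for the inequality in~(c) both you and the paper invoke the Langer--Singer theorem (your AM--GM check on geodesic circles is a pleasant addition but not logically needed once Langer--Singer is cited).

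There are two places where your route differs.  First, for the Willmore property of the critical torus the paper appeals to Palais's principle of symmetric criticality (as in Barros's work): since the critical geodesic circle is an elastica, i.e.\ a critical point of $\int_\gamma k^2\,ds$ among closed curves, the associated product torus is automatically critical for $W$ among all tori of the form $\gamma\times S^1$, and symmetric criticality upgrades this to criticality among \emph{all} nearby surfaces.  You instead plug the constant data into Corollary~2.2 and reduce the Euler--Lagrange equation to $(k/4)(k^2-2c)=0$.  Your computation is correct and more self-contained; the paper's argument is softer and explains conceptually why critical elastic curves always yield Willmore tubes in this setting, not just the minimizing circle.

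Second, a small correction to part~(a): the Souam--Toubiana classification of totally umbilic surfaces in $H^2(-c)\times\mathbb{R}$ is not exhausted by horizontal slices and vertical cylinders over geodesics; there is also a family of rotationally invariant totally umbilic spheres.  Your conclusion is unaffected since these are spheres, not tori, but your stated ``local list'' is incomplete.  In fact you can bypass the classification entirely: Corollary~2.4(ii) already asserts that any closed totally umbilic surface in $\mathbb{M}^2(\kappa)\times\mathbb{R}$ is an $S^2$, which immediately rules out a totally umbilic torus.  The paper itself simply says ``the first result is the same as above,'' presumably pointing to this corollary.
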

\begin{proof} The first result is the same as above. We just need to compute $W(x)$. Suppose that
$$\gamma_s=\alpha,\ \alpha_s=k\beta+\frac{1}{c^2}\gamma,\ \beta_s=-k\alpha.$$
Here $k$ is just the curvature of $\gamma$. So we can obtain that
for $x$, $h_{11}=k, \ h_{12}=h_{22}=0$, leading to
$$W(x_t)=\int_{T^2}\frac{k^2}{4}dM=\int_{\gamma}\int^{2\pi}_{0}\frac{k^2}{4}dtds=\frac{\pi}{2}\int_{\gamma}k^2ds.$$

By Theorem 4.1 in \cite{Langer-Singer1984}, for any closed curve in
$H^{2}(-c)$, $\int_{\gamma}k^2ds\geq4\pi\sqrt{c}$ and equality holds
if and only if $\gamma$ is congruent with a geodesic circle of radius
$\frac{\sinh^{-1}1}{\sqrt{c}}$. Similar to the discussion in
\cite{Barros}, in this case, $x=(\gamma, \cos t,\sin t)$ is a
Willmore torus in $H^2(-c)\times S^{1}$ due to the principle of
symmetric criticality in \cite{Pa} (see also \cite{Barros99,Barros88}).
\end{proof}

{\bf Acknowledgments}
The author is thankful to Professor Changping Wang, Zejun, Hu, and Xiang Ma for valuable discussion and suggestions.
The author is also thankful to the referee for many valuable suggestions which make the paper much more readable.

\def\refname{Reference}

 \vspace{5mm}  \noindent Peng
Wang, {\small\it Department of Mathematics, Tongji University,
200092 Shanghai, People's Republic of China.} e-mail: {\sf
netwangpeng@tongji.edu.cn}
\end{document}